\theoremstyle{plain}
  \newtheorem{theorem}{Theorem}[section]
\theoremstyle{definition}
  \newtheorem{definition}[theorem]{Definition}
 \theoremstyle{remark}
\numberwithin{equation}{section}
\def\rk{\rm{rk}}
\def\SS{{\mathbb S}}
\def\lk{\rm{lk}}
\begin{document}

\title 
{CW posets after the Poincare Conjecture}

\author{Patricia Hersh}
\address{Box 8205, Department of Mathematics\\
     North Carolina State University\\
      Raleigh, NC 27605}
\email{plhersh@ncsu.edu}

\thanks{The author was supported by NSF grant DMS-1002636.}

\begin{abstract}
Anders Bj\"orner characterized which finite graded partially ordered sets arise
as the closure relation on cells of a finite regular CW complex.  His characterization
of these ``CW posets''
required each open interval $(\hat{0},u)$ 
to have order complex homeomorphic to a sphere of dimension $rk(u)-2$.
Work of Danaraj and Klee showed that sufficient conditions were for the poset to 
be thin and shellable.  The proof of the Poincare Conjecture enables 
the requirement of shellability to be replaced by the homotopy Cohen-Macaulay
property.  This expands the range of tools that 
may be used to prove a poset is a CW poset.
\end{abstract}

\maketitle

\section{Introduction.}

Recall that the order complex $\Delta (P)$ of a finite poset $P$ is the 
simplicial complex whose $i$-faces are exactly the chains 
$u_0 < u_1 < \cdots < u_i$ in $P$.    For any $x<y$ in $P$, denote by
$(x,y)$ the open interval comprised of elements $z\in P$ with $x < z < y$
and let $\Delta (x,y)$ be the order complex for the subposet $(x,y)$ of $P$.

The {\it face poset} $F(K)$ of a simplicial
complex $K$  is the partial order on its faces by inclusion of sets of vertices.
More generally, the {\it closure poset} $F(K)$ of a regular CW complex
$K$ is the
partial order on its cells given by $\sigma \le \tau $ if and only if 
$\sigma \subseteq \overline{\tau }$.  It is easy to see (and well-known)
that  $\Delta ( F(K) \setminus \{ \hat{0} \} )$ is the first barycentric 
subdivision of $K$, hence homeomorphic to $K$.  

Thus, the closure 
of a cell $\tau $ must have $\Delta (F (\overline{\tau } )\setminus \hat{0} )$
homeomorphic to a ball and therefore for each $\sigma < \tau $ in the
closure poset we must have $\Delta (\sigma ,\tau )$ homeomorphic to a 
sphere $\SS^{\dim \tau - \dim \sigma }$.  

Anders Bj\"orner introduced the notion of CW poset in \cite{Bj}:

\begin{definition}
A finite graded poset $P$ is a {\it CW poset} if:
\begin{enumerate}
\item
$P$ has a unique minimal element $\hat{0}$
\item
$P$ has at least one additional element
\item
For each $y$ in $P\setminus \{ \hat{0} \} $, the open interval $(\hat{0},y)$ has order 
complex homeomorphic to a sphere $\SS^{\rk y - 2}$
\end{enumerate}
\end{definition}

He proved the following:

\begin{theorem}[Bj\"orner]
A finite graded poset $P$ is the closure poset of a regular CW complex if and
only if $P$ is a CW poset.
\end{theorem}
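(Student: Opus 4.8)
The plan is to handle the two implications separately; the forward implication is essentially contained in the discussion preceding the statement, while the substance lies in the converse.

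For the forward implication, suppose $P=F(K)$ for a regular CW complex $K$. The formal minimum $\hat 0$ (the empty cell) gives condition~(1), a nonempty $K$ has at least one cell giving condition~(2), and—as already observed in the introduction—for each $\sigma<\tau$ in $F(K)$ one has $\Delta(\sigma,\tau)\cong\SS^{\dim\tau-\dim\sigma}$. Applying this to a covering relation forces the dimension function to increase by exactly $1$ along each cover, so $F(K)$ is graded with $\rk=\dim+1$; applying it with $\sigma=\hat 0$ yields $\Delta(\hat 0,y)\cong\SS^{\dim y-1}=\SS^{\rk y-2}$, which is condition~(3). So I would spend essentially no effort here.

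For the converse, let $P$ be a CW poset; I would induct on $|P|$. In the base case $|P|=2$, say $P=\{\hat 0,a\}$ with $a$ an atom, take $K$ to be a single point. For the inductive step, pick a maximal element $u\in P$ and put $P'=P\setminus\{u\}$. Because $u$ is maximal, every open interval $(\hat 0,y)$ with $y\ne u$ is unchanged in passing from $P$ to $P'$, and the rank function of $P$ restricts, so $P'$ is again a graded poset satisfying condition~(3) (and conditions~(1),(2), since $|P'|\geq 2$), hence a CW poset; the inductive hypothesis supplies a regular CW complex $K'$ together with a fixed identification $F(K')\cong P'$. Now consider $D=\{v\in P:v<u\}$. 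Since $\hat 0$ is the global minimum, $D=(\hat 0,u)$, and it is a down-set contained in $P'$, so $L:=\bigcup_{v\in D}\overline{e_v}$ is a subcomplex of $K'$ whose closure poset is $\{\hat 0\}\cup D$. Therefore the first barycentric subdivision of $L$ is $\Delta(D)=\Delta((\hat 0,u))$, which by condition~(3) is homeomorphic to $\SS^{\rk u-2}$; and since $P$ is graded, the maximal elements of $D$ all have rank $\rk u-1$, so $L$ is a regular CW sphere of dimension $\rk u-2$. Choosing a homeomorphism $\phi\colon\SS^{\rk u-2}=\partial B^{\rk u-1}\to L$, I would set $K:=K'\cup_\phi B^{\rk u-1}$, with $e_u$ the interior of the attached ball. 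Regularity of $K$ is immediate, since the attaching map of the only new cell is a homeomorphism onto the subcomplex $L$; and $\overline{e_u}=e_u\cup L$ shows that in $F(K)$ the element $u$ lies above exactly the elements of $D$, while all other order relations are inherited from $F(K')\cong P'$. Hence $F(K)\cong P$, completing the induction.

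The main obstacle—and the place where condition~(3) is genuinely used—is the assertion that $L$ is homeomorphic to $\SS^{\rk u-2}$. One needs the order complex of $(\hat 0,u)$ to be an honest topological sphere (not merely homotopy equivalent to one, and not some other manifold), for two reasons: so that a ball of the correct dimension can literally be glued onto $L$ along a homeomorphism of boundaries, and so that the resulting complex is again \emph{regular}. This is exactly why condition~(3) is phrased in terms of homeomorphism type. The remaining points—that $D$ is a down-set, that deleting a maximal element preserves conditions~(1)--(3) together with gradedness, and the dimension count $\dim L=\rk u-2$—are routine consequences of $P$ being graded.
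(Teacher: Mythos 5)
The paper states this result without proof, simply citing Bj\"orner's original article [Bj]; so the comparison is really to Bj\"orner's own argument, and your proposal follows it in all essentials. The forward direction is indeed just the discussion in the introduction, and the converse is handled, as Bj\"orner does, by building the complex cell-by-cell: you phrase it as deleting a maximal element $u$ and inducting on $|P|$, but this is the same construction as attaching cells in order of increasing rank. The crucial step is exactly the one you flag, namely that condition~(3) guarantees the subcomplex $L$ carried by $(\hat 0,u)$ is an \emph{actual} sphere $\SS^{\rk u -2}$ (via the barycentric-subdivision homeomorphism), so a cell can be glued on by a genuine homeomorphism of its boundary sphere onto $L$, preserving regularity; your observation that $D$ is a down-set and hence that $L$ is a subcomplex of $K'$ with closure poset $\{\hat 0\}\cup D$ is the needed bookkeeping, and the base case and dimension counts are in order. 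One small blemish: in the forward direction you quote the introduction's formula $\Delta(\sigma,\tau)\cong\SS^{\dim\tau-\dim\sigma}$, which has a typo (the exponent should be $\dim\tau-\dim\sigma-2$); note that your own subsequent computation $\Delta(\hat 0,y)\cong\SS^{\dim y -1}=\SS^{\rk y-2}$ and your claim that covers increase dimension by exactly one are consistent with the corrected exponent, not the one you wrote, so the conclusion is unaffected but the displayed formula should be fixed.
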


Note, for instance, that all 
nontrivial simplicial posets (cf. \cite{St}) with unique minimal element
are  CW posets.
%
%
A necessary condition for a poset to be a CW poset is for each closed 
interval to be Eulerian:

\begin{definition}
A finite graded poset $P$ with unique minimal and maximal elements 
$\hat{0}$ and $\hat{1}$ is {\it Eulerian} if $\mu_P (x,y) = (-1)^{\rk y - \rk x - 2}$
for each $x<y$ in $P$. 
\end{definition}

Necessity is immediate from the fact that a sphere $\SS^d$ has reduced 
Euler characteristic $(-1)^d$, together with the relationship 
$\mu_P(x,y) = \tilde{\chi}(\Delta(x,y) )$
observed by Philip Hall.
A related weaker condition than the Eulerian property is thinness:

\begin{definition}
A finite graded poset $P$ is {\it thin} if each open interval $(x,y)$ for 
$x<y$ with $\rk y - \rk x = 2$ has exactly two elements.
\end{definition}

Not only is thinness of a poset $P$ 
the restriction of the Eulerian property to rank 2 intervals,
but it also implies for graded posets $P$ that $\Delta(P)$ is a pseudomanifold.

\begin{definition}
A simplicial complex is {\it pure} if all maximal faces have the same 
dimension.
\end{definition}

\begin{definition}
A simplicial complex is {\it shellable} if there is a total order on its maximal
faces $F_1,\dots ,F_k$ such that $\overline{F_j}\cap (\cup_{i<j} \overline{F_i})$
is a pure codimension one subcomplex of  $\overline{F_j}$ for each $j\ge 2$.
\end{definition}

A poset is said to be shellable if its order complex is shellable.
A practical set of sufficient conditions for checking that a finite graded poset 
$P$ is a CW poset (cf. \cite{DK}) is as follows:

\begin{theorem}[Danaraj and Klee]
If a finite graded poset $P$ is thin and shellable, then $P$ is a CW poset.
\end{theorem}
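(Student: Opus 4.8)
The plan is to reduce the statement to the theorem of Danaraj and Klee \cite{DK} that a shellable pseudomanifold is PL-homeomorphic to a ball if its boundary is nonempty and to a sphere if its boundary is empty. Conditions (1) and (2) of the definition of a CW poset are built into the hypotheses, so everything comes down to showing that for every $y \in P \setminus \{\hat{0}\}$ the order complex $\Delta(\hat{0},y)$ is homeomorphic to $\SS^{\rk y - 2}$, and I would obtain this by checking that $\Delta(\hat{0},y)$ is a shellable $(\rk y - 2)$-pseudomanifold with empty boundary.

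I would verify these hypotheses as follows; throughout write $r = \rk y$ and note that the cases $r \le 2$ are trivial. \emph{Purity and dimension}: since $P$ is graded, the open interval $(\hat{0},y)$ is graded with all maximal chains of length $r-2$, so $\Delta(\hat{0},y)$ is pure of dimension $r-2$. \emph{Pseudomanifold}: $(\hat{0},y)$ inherits thinness from $P$, so by the fact recalled in the introduction the order complex of a thin graded poset is a pseudomanifold, and hence $\Delta(\hat{0},y)$ is one. \emph{Empty boundary}: a codimension-one face of $\Delta(\hat{0},y)$ arises from a maximal chain $x_1 < \cdots < x_{r-1}$ of $(\hat{0},y)$ by deleting one element $x_j$, and the facets of $\Delta(\hat{0},y)$ containing it correspond exactly to the elements of rank $j$ in the open interval $(x_{j-1}, x_{j+1})$, with the conventions $x_0 = \hat{0}$ and $x_r = y$; this interval has rank $2$, so thinness of $P$ produces exactly two of them, whence every codimension-one face lies in exactly two facets. \emph{Shellability}: it is standard that shellability of a poset is inherited by its open intervals, so $\Delta(\hat{0},y)$ is shellable --- or, avoiding that fact, $\Delta[\hat{0},y]$ is an iterated cone over $\Delta(\hat{0},y)$, and a given ordering of facets shells a complex if and only if it shells any cone over it, so one needs only that the closed interval $[\hat{0},y]$ is shellable. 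The Danaraj--Klee theorem now applies in its empty-boundary case and gives $\Delta(\hat{0},y) \cong \SS^{r-2}$, which is condition (3).

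For completeness I would recall the idea behind the Danaraj--Klee theorem. Given a shelling $F_1, \dots, F_k$ of a $d$-pseudomanifold $\Delta$, one shows by induction, starting from the simplex $\Delta_1 = \overline{F_1}$, that each partial union $\Delta_j = \overline{F_1} \cup \cdots \cup \overline{F_j}$ with $j<k$ is a PL $d$-ball: the inductive step glues the $d$-simplex $\overline{F_{j+1}}$ onto $\Delta_j$ along $\overline{F_{j+1}} \cap \Delta_j$, which by the shelling condition is a pure codimension-one subcomplex of $\partial\overline{F_{j+1}} \cong \SS^{d-1}$ and which a strong-connectivity count forces to be proper (otherwise $\Delta_{j+1}$ would be a closed $d$-pseudomanifold properly contained in $\Delta$, impossible for a connected $d$-pseudomanifold), hence a PL $(d-1)$-ball lying in $\partial\Delta_j$; the classical PL fact that two balls meeting in a ball in each of their boundaries have a ball as union then completes the step. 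When $j+1 = k$ the intersection $\overline{F_k} \cap \Delta_{k-1}$ is all of $\partial\overline{F_k}$ exactly when $\Delta$ has empty boundary, and in that case $\Delta_k = \Delta$ is a union of two $d$-balls along their common boundary $(d-1)$-sphere, i.e. $\SS^d$; otherwise it is again a ball.

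The main obstacle is this Danaraj--Klee pseudomanifold lemma itself --- specifically the bookkeeping that each partial union $\Delta_j$ with $j<k$ still has nonempty boundary, so that the last facet either caps the complex off to a sphere or leaves a boundary, with the empty-boundary hypothesis selecting the sphere, together with the classical PL gluing results for balls and spheres. It is worth emphasizing, this being the theme of the paper, that all of this input is purely PL and uses no form of the Poincar\'e Conjecture; that ingredient is needed only later, when shellability is weakened to homotopy Cohen--Macaulayness and a homotopy sphere must be upgraded to a topological one. A secondary point requiring care is the inheritance of shellability by intervals.
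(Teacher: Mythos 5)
The paper does not supply a proof of this statement; it is stated as a known result and cited to \cite{DK}, so there is no ``paper's own proof'' to compare against. That said, your reconstruction is correct and is essentially the original Danaraj--Klee argument: reduce condition~(3) of the CW-poset definition to the lemma of \cite{DK} that a shellable $d$-pseudomanifold is a PL ball or PL sphere according as its boundary is nonempty or empty, verify that $\Delta(\hat 0, y)$ is pure of dimension $\rk y - 2$ (gradedness), a pseudomanifold with every codimension-one face in exactly two facets (thinness applied to the rank-$2$ intervals $(x_{j-1},x_{j+1})$ with $x_0=\hat 0$, $x_r=y$), and shellable (since links of faces in a shellable complex are shellable, and the link of the chain $\{\hat 0,y\}$ in $\Delta(P)$ is the join $\Delta(\hat 0,y)*\Delta(P_{>y})$, whose shellability forces that of each join factor).

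Two small points worth flagging. First, the paper's remark that thinness alone makes the order complex a pseudomanifold glosses over strong connectivity; in your argument that is not a gap because shellability supplies strong connectivity for free, but if you invoke the pseudomanifold lemma you should be aware you are using shellability twice (for strong connectivity and for the ball/sphere dichotomy). Second, your alternative ``cone'' route to shellability of $\Delta(\hat 0,y)$ is incomplete on its own: it reduces the problem to shellability of $\Delta[\hat 0,y]$, but one still needs a link/interval inheritance argument to get from shellability of $\Delta(P)$ to shellability of $\Delta[\hat 0,y]$ when $y$ is not maximal, so the inheritance fact cannot really be avoided. Your closing observation is exactly right: no form of the Poincar\'e Conjecture is needed here, because shelling a pseudomanifold gives a PL sphere directly; the Poincar\'e Conjecture enters only in the paper's new theorem, where shellability is relaxed to homotopy Cohen--Macaulayness and one must upgrade a homotopy sphere that is a manifold to a topological sphere.
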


For example, in \cite{BW}, Bj\"orner and Wachs proved that Bruhat order
is thin and shellable, hence is a CW poset.

Denote by $\lk_K (\sigma )$ the link of a cell $\sigma $ in a regular CW 
complex $K$.  In the special case of simplicial complexes, 
$\lk_K (\sigma ) = \{ \tau \in K | \tau \cap \sigma = \emptyset, \tau \cup \sigma \in K \} $.

\begin{definition}
A pure $d$-dimensional simplicial complex (or regular CW complex) $K$ 
is homotopy Cohen-Macaulay if 
for each cell $\sigma $ in $K$ (including the empty cell), $lk_K(\sigma )$
is also homotopy equivalent to a wedge of ($d - \dim \sigma -1 )$-dimensional
spheres.
\end{definition}

It is well known that pure, shellable simplicial complexes are homotopy
Cohen-Macaulay.   See \cite{Bj2} for further background.


\section{Thin, homotopy Cohen-Macaulay posets are CW posets}

It seems natural to ask if the shellability requirement may be replaced by 
the more general hypothesis that $P$ is homotopy Cohen-Macaulay.   This
would have the benefit of enabling a wider array of techniques of 
topological combinatorics, for instance discrete Morse theory  (cf. \cite{Fo}, \cite{BH}) and 
the Quillen Fiber Lemma (cf. \cite{Qu}), 
to be used to prove that particular families of posets 
of interest are CW posets.  
%
Interestingly, this follows quite quickly from
the Poincare Conjecture, now a theorem:

\begin{theorem}[Poincare Conjecture]\label{poin-conj}
If $K$ is a manifold that is a simply connected homology sphere, then $K$
is homeomorphic to a sphere.  In other words, if $K$ is a homotopy sphere 
that is also a manifold, then $K$ is homeomorphic to a sphere.
\end{theorem}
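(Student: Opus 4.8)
The statement to be established is the Poincar\'e Conjecture, now a theorem, so a ``proof proposal'' here is really a proposal for how one assembles the known proof. Since for a (closed) manifold $K$ being a homotopy sphere is equivalent to being a simply connected homology sphere (Hurewicz plus Whitehead), the plan is to fix the dimension $n$ of $K$ and treat $n\le 2$, $n\ge 5$, $n=4$, and $n=3$ separately, since the techniques in these ranges have nothing to do with one another.

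For $n\le 2$ I would simply quote the classification of closed surfaces: the only simply connected one is $\SS^2$. For $n\ge 5$ the plan is to reduce to Smale's $h$-cobordism theorem. Remove two disjoint open $n$-balls from $K$; what remains is a compact manifold whose boundary is two copies of $\SS^{n-1}$, and because $K$ is a homotopy sphere this manifold is an $h$-cobordism between them, hence by the $h$-cobordism theorem a product $\SS^{n-1}\times[0,1]$. Re-gluing the two balls then exhibits $K$ as two $n$-balls identified along their boundary spheres by some homeomorphism of $\SS^{n-1}$, and Alexander's trick extends that homeomorphism over one of the balls, so $K$ is homeomorphic to $\SS^n$. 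For $n=4$ the plan is to invoke Freedman's classification of simply connected closed topological $4$-manifolds, which gives directly that such a manifold with the homology (equivalently, intersection form) of $\SS^4$ is homeomorphic to $\SS^4$.

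The remaining case $n=3$ is the heart of the matter, and here the plan is the Hamilton--Perelman program. Starting from an arbitrary Riemannian metric on the closed simply connected $3$-manifold $K$, I would run the Ricci flow with surgery, using Perelman's no-local-collapsing theorem together with the canonical-neighbourhood description of high-curvature regions (modeled on $\kappa$-solutions) to control the singular times, and Perelman's finite-extinction estimate (the flow on a $3$-manifold with finite fundamental group becomes extinct in bounded time) to conclude that the process terminates. Tracking how the topology changes under the surgeries then shows that $K$ is built by connected sum from spherical space forms and copies of $\SS^2\times\SS^1$; triviality of $\pi_1$ eliminates all the nontrivial summands and leaves $K$ homeomorphic to $\SS^3$.

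The main obstacle is unambiguously the case $n=3$: essentially all of the analytic difficulty — long-time existence of Ricci flow with surgery, the classification of blow-up limits, and the finite-extinction argument — is concentrated there. By contrast, the cases $n\ne 3$ reduce, after the trivial surface case, to two structural black boxes (the $h$-cobordism theorem and Freedman's theorem) that the combinatorial applications in the next section may safely take for granted.
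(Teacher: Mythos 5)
Your proposal matches the paper exactly in structure: the paper does not prove this theorem but simply records it as known, breaking the attribution out by dimension (classification of surfaces for $n\le 2$, Smale and Zeeman for $n\ge 5$, Freedman for $n=4$, Perelman via Kleiner--Lott for $n=3$), which is precisely your case split. Your sketch adds correct detail to each black box (the Hurewicz--Whitehead equivalence of the two formulations, the $h$-cobordism plus Alexander-trick argument in high dimensions, and the Ricci-flow-with-surgery outline in dimension 3), but the approach is the same.
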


Theorem ~\ref{poin-conj}
 follows in dimension two from the classification of surfaces.  It was proven
in dimension at least 5 by Smale in \cite{Sm} (see also \cite{Ze}), 
in dimension 4 by
Freedman in \cite{Fr}, and finally in dimension 3 by Perelman (see \cite{Pe}, \cite{Pe2},
\cite{Pe3}, with full detail  in \cite{KL}).
Now to a consequence for topological combinatorics.
 
\begin{theorem}
Let $P$ be a finite graded poset with unique minimal element $\hat{0}$ and
at least one other element.  Then $P$ is a CW poset if $P$ is thin
and homotopy Cohen-Macaulay.
\end{theorem}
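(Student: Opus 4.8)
The plan is to verify the three defining conditions of a CW poset directly, where only condition (3)---that each open interval $(\hat{0},y)$ has order complex homeomorphic to a sphere $\SS^{\rk y - 2}$---requires work. The strategy is to induct on $\rk y$. For the base cases $\rk y = 1$ (where $(\hat{0},y)$ is empty, matching $\SS^{-1}$) and $\rk y = 2$ (where thinness forces $(\hat{0},y)$ to be two points, i.e.\ $\SS^0$), the claim is immediate. So fix $y$ with $\rk y = n \geq 3$ and assume the result for all elements of smaller rank.

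The key observation is that $\Delta(\hat{0},y)$ is a pure $(n-2)$-dimensional simplicial complex which, by the inductive hypothesis applied to each $z$ with $\hat{0} < z < y$, has all proper links (as well as itself, via the homotopy Cohen-Macaulay hypothesis) that are \emph{manifolds}: indeed $\Delta(\hat{0}, z)$ is a sphere by induction, and more generally the link in $\Delta(\hat{0},y)$ of a chain $\hat{0} < z_1 < \cdots < z_k < y$ decomposes as a join of intervals $\Delta(z_i, z_{i+1})$, each of which is again a sphere by the inductive hypothesis (these shorter intervals inherit thinness and, since links of homotopy Cohen-Macaulay complexes are homotopy Cohen-Macaulay, also inherit that property---or alternatively one applies the inductive statement to the poset $(\hat{0}, z_{i+1}]$). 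A join of spheres is a sphere, so every link of a nonempty face is a sphere of the expected dimension; hence $\Delta(\hat{0},y)$ is a closed manifold (this is where thinness is doing its essential work, providing the pseudomanifold/manifold structure rather than just the Euler-characteristic count). Meanwhile, the homotopy Cohen-Macaulay hypothesis tells us $\Delta(\hat{0},y)$ is homotopy equivalent to a wedge of $(n-2)$-spheres; since it is a connected manifold of dimension $n-2 \geq 1$ with the rational homology of such a wedge, a standard argument (a connected closed manifold has top homology $\integers$ or $0$, forcing the wedge to be a single sphere, and then all lower reduced homology vanishes) shows it is in fact a homology $(n-2)$-sphere, and being homotopy Cohen-Macaulay it is simply connected for $n - 2 \geq 2$ (for $n-2 = 1$ one argues directly that a connected closed $1$-manifold is a circle). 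Thus $\Delta(\hat{0},y)$ is a simply connected homology sphere that is a manifold, so by Theorem~\ref{poin-conj} it is homeomorphic to $\SS^{n-2}$, completing the induction.

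The main obstacle, and the step deserving the most care, is pinning down precisely that $\Delta(\hat{0},y)$ is a genuine topological manifold---not merely a pseudomanifold---so that the Poincar\'e Conjecture applies. This is exactly where one must combine thinness (which handles the codimension-one local structure, guaranteeing each codimension-one face lies in exactly two facets) with the inductive hypothesis on all proper open subintervals (which upgrades the local structure at every face to that of a sphere, via the join decomposition of links in order complexes). One should also be slightly careful about the simple-connectivity needed to invoke Theorem~\ref{poin-conj}: homotopy Cohen-Macaulayness gives simple connectivity when the dimension is at least $2$, and the dimension-$1$ and dimension-$2$ cases can either be folded in via the remarks in the excerpt (classification of surfaces, respectively $1$-manifolds) or handled as separate small base cases. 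Once condition (3) is established for all $y$, conditions (1) and (2) hold by hypothesis, so $P$ is a CW poset.
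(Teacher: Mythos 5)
Your proposal follows essentially the same route as the paper: induct on rank, show that links in the order complex decompose as joins of smaller open intervals (hence spheres by induction), so the complex is a closed manifold; combine with the homotopy Cohen--Macaulay property to get a wedge of top-dimensional spheres; use the manifold structure to force that wedge to be a single sphere; and finish with the Poincar\'e Conjecture. The one spot you gloss over is ruling out the empty wedge (the contractible case, when the manifold is non-orientable with trivial top integral homology): the paper makes this explicit by observing that a contractible manifold would have to be a manifold with boundary, contradicting thinness, whereas you jump straight to ``forcing the wedge to be a single sphere''---the missing observation is just that a closed connected manifold of positive dimension has nontrivial top $\integers/2$-homology, so it cannot be contractible.
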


\begin{proof}
First note that the theorem is true for $P$ of rank 2, in which case $P$ has
exactly 4 elements and exactly one nontrivial open interval. 
Now suppose by induction the theorem holds
for $P$ of rank strictly less than $r$.  Consider $P$ of exactly rank $r$, and 
consider any open interval $(x,y)$ in $P$.
The link of any vertex in $\Delta(x,y)$  is the order complex of a 
product of open intervals of strictly smaller rank.  By induction, it is therefore
the join of simplicial complexes each homeomorphic to a sphere, therefore
is itself homeomorphic to a sphere.  This implies that $\Delta(x,y)$ is a 
manifold.  Since $P$ is homotopy Cohen-Macaulay, we also have that 
$\Delta(x,y)$ is homotopy equivalent to a wedge of spheres; by thinness, we
may assume $\rk y - \rk x > 2$, implying $\Delta(x,y)$ is connected.  But the 
top homology group over the integers of any connected manifold is either 
the integers or 0, depending whether the manifold is orientable or not.  Thus,
$\Delta(x,y)$ is either contractible or homotopy equivalent to a  sphere.
In the former case, $\Delta(x,y)$ would be a manifold with boundary, with
this boundary  pure of codimension one; this contradicts thinness,
which implies each codimension one face is contained in precisely two facets.
Thus, $\Delta(x,y)$ is homotopy equivalent to a sphere.  But the Poincare 
Conjecture yields that any 
manifold that is homotopy equivalent to a sphere 
must be homeomorphic to a sphere.
\end{proof}

Anders Bj\"orner has independently obtained related results [Bj3], also using the
Poincare Conjecture.
It seems natural now to ask whether the Poincare Conjecture has further 
ramifications of a similar spirit for topological combinatorics.

\section{Acknowledgments}

The author thanks Anders Bj\"orner and Lenny Ng for helpful comments.


\begin{thebibliography}{99}

\bibitem[BH]{BH}
E. Babson and P. Hersh, Discrete Morse functions from lexicographic orders,
Trans. Amer. Math. Soc., {\bf 357} (2005), no. 2, 509--534.

\bibitem[Bj]{Bj}
A. Bj\"orner, Posets, regular CW complexes and Bruhat order, 
{\it European J. Combin.} {\bf 5} (1984), no. 1, 7--16


\bibitem[Bj2]{Bj2}
A. Bj\"orner,
Topological methods. {\it Handbook of combinatorics}, Vol. 1, 2, 1819--1872,
Elsevier, Amsterdam, 1995. 

\bibitem[Bj3]{Bj3}
A. Bj\"orner, personal communication.

\bibitem[BW]{BW}
A. Bj\"orner and M. Wachs, Bruhat order of Coxeter groups and shellability, 
Adv. in Math., {\bf 43} (1982), no. 1, 87--100.

\bibitem[DK]{DK}
G. Danaraj and V. Klee, Shellings of spheres and polytopes, Duke Math. J., {\bf 41}
(1974), 443--451.

\bibitem[Fr]{Fr}
M. H. Freedman, The topology of four-dimensional manifolds, J. Diff. Geom. {\bf 17}
(1982), 357--453.


\bibitem[Fo]{Fo}
R. Forman, Morse theory for cell complexes, Adv. Math. {\bf 134} (1998), no. 1, 
90--145.

\bibitem[KL]{KL}
B. Kleiner and J. Lott, Notes on Perelman's papers, Geom. Topol. {\bf 12} (2008), no.
5, 2587--2855.


\bibitem[Pe]{Pe}
G. Perelman, The entropy formula for the Ricci flow and its geometric applications, 
arXiv:math.DG/0211159

\bibitem[Pe2]{Pe2}
G. Perelman, Ricci flow with surgery on three-manifolds, arXiv:math.DG/0303109

\bibitem[Pe3]{Pe3}
G. Perelman, Finite extinction time for the solutions to the Ricci flow on certain
three-manifolds, arXiv:math.DG/0307245

\bibitem[Qu]{Qu}
D. Quillen, Homotopy properties of the poset of nontrivial $p$-subgroups of a 
group, Adv. in Math., {\bf 28} (1978), no. 2, 101--128.

\bibitem[Sm]{Sm}
S. Smale, Generalized Poincare's conjecture in dimensions greater than four, 
Ann. Math. {\bf 117} (1961), 391--406.


\bibitem[SZ]{SZ}
J. Stallings, Polyhedral homotopy spheres, Bull. Amer. Math. Soc. {\bf 66}
(1960), 485--488.



\bibitem[St]{St}
R. Stanley, $f$-vectors and $h$-vectors of simplicial posets, J. Pure Appl. Algebra,
{\bf 71} (1991), no. 2--3, 319--331.

\bibitem[Ze]{Ze}
E.C. Zeeman, The Poincare conjecture for $n\ge 5$, in Topology of 
3-Manifolds and Related Topics Prentice-Hall, Englewood Cliffs, NJ, 
1962, 198--204.

\end{thebibliography}
\end{document}